\documentclass[reqno, 11pt]{amsart}

\usepackage{amsmath,amsthm,amssymb}
\usepackage{booktabs}
\usepackage{xcolor}
\usepackage{geometry}
\usepackage{hyperref}
\hypersetup{
    colorlinks,
    linkcolor={red!60!black},
    citecolor={green!60!black},
    urlcolor={blue!60!black}
}

\theoremstyle{plain}
\newtheorem{Theorem}{Theorem}[section]
\newtheorem{Proposition}[Theorem]{Proposition}
\newtheorem{Lemma}[Theorem]{Lemma}
\newtheorem{Remark}[Theorem]{Remark}

\theoremstyle{definition}
\newtheorem{Definition}[Theorem]{Definition}

\DeclareMathOperator{\SL}{SL}
\DeclareMathOperator{\GL}{GL}
\DeclareMathOperator{\Sym}{Sym}
\DeclareMathOperator{\tr}{tr}
\newcommand{\F}{\mathbb F}
\newcommand{\Z}{\mathbb Z}

\renewcommand{\le}{\leqslant}
\renewcommand{\ge}{\geqslant}

\title{There are no sharply transitive subsets of $\SL(2,q)$ for $q\ge 13$}

\author{John Bamberg}
\address[JB]{Department of Mathematics and Statistics, 
The University of Western Australia, 
35 Stirling Highway, 
Perth, W.A. 6009, Australia.}
\email{John.Bamberg@uwa.edu.au}

\author{Sam Mattheus}
\address[SM]{Department of Mathematics and Data Science, 
Vrije Universiteit Brussel, 
Pleinlaan 2, 1050 Brussels, Belgium.}
\email{Sam.Mattheus@vub.be}

\date{\today}

\keywords{sharply transitive set; maximal partial ovoid; special linear group}
\subjclass[2020]{Primary 20B05; Secondary 05B25, 20C20, 51E12}

\begin{document}

\begin{abstract}
It was known at least to L.E.~Dickson in 1901 that $\mathrm{SL}(2,q)$, in its
natural action on $\F_q^2\setminus\{0\}$, has
a sharply transitive subgroup only when $q\in\{2,3,5,7,11\}$. For $q$ prime, this result stems from
Galois' letter to Chevalier in 1832. We extend this result to sharply transitive \emph{subsets} of 
$\mathrm{SL}(2,q)$ and show that they only exist when $q\in\{2,3,5,7,11\}$.
\end{abstract}

\maketitle

\section{Introduction}

Let $G$ be a transitive permutation group on a finite set $\Omega$.  A subset
$S\subseteq G$ is called \emph{sharply transitive} if, for every ordered pair
$(\alpha,\beta)\in\Omega\times\Omega$, there is a unique element $s\in S$
such that $\alpha^s=\beta$.  In this paper we consider the natural action of
$G=\SL(2,q)$ on $V^\#=\F_q^2\setminus\{0\}$.
This action has degree $q^2-1$, and we ask whether $G$ contains a sharply
transitive subset on $V^\#$. 

A closely related result was obtained by Bonisoli and Korchmaros \cite{BonisoliKorchmaros}, building
on the work of Bonisoli \cite{Bonisoli}. They proved that, for every prime power $q$, 
every sharply transitive subset of $\mathrm{PGL}(2,q)$, in its natural action on the projective line $\mathrm{PG}(1,q)$, 
is a coset of a subgroup. A simpler proof has appeared recently in \cite{Eberhard}. Other interesting actions
of finite transitive permutation groups were addressed by M\"uller and Nagy \cite{MullerNagy}.
It is well-known via the correspondence with \emph{all-even} Latin squares that the natural actions of 
the alternating groups $A_n$ always have sharply transitive subsets for $n\ge 3$ (see \cite{Donovan_et_al}).
Ernst and Schmidt studied sharply transitive subsets of $\mathrm{GL}(n,q)$ acting on certain types of flags \cite{ErnstSchmidt}.

The subgroup case is classical. Galois already identified the exceptional prime cases $p=5,7,11$ in his 1832 letter to Chevalier, and Dickson's classification \cite[Chapter XII]{Dickson} of the subgroups of $\mathrm{PSL}(2,q)$ shows that a sharply transitive subgroup on $V^\#$ exists precisely for $q=2,3,5,7,11$. In these cases one obtains, respectively,
$C_3$, $Q_8$, $2.A_4$, $2.S_4$, $2.A_5$ of order $q^2-1$. 
These examples belong to the classical theory of finite transitive
linear groups.  After choosing a nonzero vector to serve
as identity, a sharply transitive subgroup of $\GL(V)$ on $V^\#$ is the
multiplicative group of a finite nearfield on the additive group of
$V$; in particular, the cases $q=5,7,11$ occur among the exceptional
nearfields of Zassenhaus \cite[Section~5 and Table~1]{Nagy}.
The corresponding problem considered here has a rather different character: a sharply transitive set in $\SL(2,q)$ need not be a subgroup, and its existence is instead constrained by the finer combinatorial and representation-theoretic structure of $\SL(2,q)$.

The problem is also naturally connected to certain objects in finite classical polar spaces.
For $q$ odd, maximal partial ovoids of size $q^2-1$ in the parabolic quadric
$Q(4,q)$ may be represented as sharply transitive subsets of $\SL(2,q)$.
This connection was made explicit by Coolsaet, De Beule and Siciliano, who
gave a uniform description of all known examples and related them to spread
sets~\cite{CoolsaetDeBeuleSiciliano}.  Earlier work of De Beule and G\'{a}cs \cite{DeBeuleGacs},
and subsequently De Beule \cite{DeBeule}, ruled out such maximal partial ovoids when
$q=p^h$ and $h>1$. Thus the remaining case is $q=p$ prime.
Our main theorem proves that there are no further examples than what is known.

\begin{Theorem}\label{maintheorem}
Let $p\ge 13$ be a prime.  Then $\SL(2,p)$, in its natural action on
$\F_p^2\setminus\{0\}$, contains no sharply transitive subset.
\end{Theorem}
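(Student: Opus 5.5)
We will translate sharp transitivity into an identity in the group algebra $\mathbb C[G]$, $G=\SL(2,p)$, and then derive a contradiction from the character table. Let $H$ be the stabiliser of a nonzero vector $e_1$. Then $H\cong \F_p$ is the group of unipotent matrices fixing $e_1$, of order $p$, and $|G|=p(p^2-1)=|S|\,|H|$. A subset $S$ of size $p^2-1$ is sharply transitive if and only if every $g\in G$ factors uniquely as $g=h s h'$ with $h,h'$ in suitable point-stabilisers. Equivalently, for every pair of vectors there is exactly one $s\in S$ mapping the first to the second. In group-algebra language this says that
\[
\textstyle\sum_{x,y\in V^\#}[\,\text{exactly one } s\in S \text{ with } x^s=y\,]
\]
holds, which is the condition $\pi(\hat S)=J$. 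Here $\pi$ is the permutation representation on $V^\#$, $\hat S=\sum_{s\in S}s$, and $J$ is the all-ones matrix.

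The first step is to decompose $\pi=\operatorname{Ind}_H^G 1$ into irreducibles. By Frobenius reciprocity, $\pi$ contains the trivial character once, the Steinberg character, each principal series character $\chi_\alpha$ (of degree $p+1$) twice, and each of the four degree-$(p\pm1)/2$ characters with a computable multiplicity. The discrete-series characters do not occur, since they vanish on nontrivial unipotents with the appropriate sign. Since $J$ projects to zero on every nontrivial constituent, we obtain $\rho(\hat S)P_\rho=0$ for every nontrivial $\rho\le\pi$, where $P_\rho$ is the projection onto the $H$-fixed vectors. In particular, $\chi(\hat S\,\hat H)=0$ for all such nontrivial constituents.

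The second step is to convert these conditions into linear constraints on the \emph{class profile} of $S$, namely the numbers $a_C=|S\cap C|$ for the conjugacy classes $C$. We would take suitable averages, for example the trace of $\rho(\hat S)\rho(\hat S)^*$ restricted to $H$-fixed spaces, or equivalently the character sums $\sum_s\chi(s)$. These averages give $\sum_C a_C\chi(C)=0$ for a family of characters $\chi$ in $\pi$ (after an averaging over $H$-double cosets), together with $\sum_C a_C=p^2-1$. We would also use the non-negativity $a_C\ge0$. Further constraints come from $S$ being sharply transitive both as a set and as an inverse set, and from the fact that $S$ contains no element with a nonzero fixed vector other than possibly the identity. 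Indeed, sharpness forces each $s\ne 1$ to be fixed-point-free on $V^\#$, so $S\setminus\{1\}$ avoids $\pm$ unipotent elements with eigenvalue $1$. This means that only $-1$, $-$unipotents, split semisimple elements and nonsplit semisimple elements may appear.

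The main obstacle is the third step: showing that the resulting system (Delsarte-type linear programming over the class profile, sharpened by a $p$-adic or modular argument) is infeasible for $p\ge13$. Here we would try to exhibit an explicit dual certificate, namely a nonnegative combination of the principal series characters $\chi_\alpha$ restricted to the allowed classes, with sums over Gauss-type character values $\alpha(x)+\alpha(x)^{-1}$. This combination should give a quantity that must be both $0$ and strictly positive once $p\ge 13$.

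If the LP bound alone only yields $|S|\le p^2-1$ with equality constrained, we would then exploit the resulting equality structure: the equality case forces $S\hat H$ to be a specific idempotent. From this we would derive divisibility or rationality conditions, for instance that certain character sums are integers divisible by $p$, that can only hold for $p\le 11$. This is the delicate part. The small cases $p=5,7,11$ genuinely exist, so the certificate must use the size of $p$ essentially, presumably through an inequality like $(p-1)/2>$ some fixed constant arising from the number of allowed classes.
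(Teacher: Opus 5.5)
\textbf{There is a genuine gap.} Your third step, where all the difficulty lies, is only a hope, and the mechanism you propose for it cannot work.

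Every nontrivial constituent of $\pi=\mathrm{Ind}_H^G1$ is constant on the union of the nonsplit semisimple classes. These constituents are the Steinberg character, the principal series $\chi_i$, and the two characters of degree $(p+1)/2$. On the nonsplit classes they vanish, except the Steinberg character, which is $-1$. So the constraints $\sum_{s\in S}\chi(s)=0$ only see how many nonsplit elements $S$ has, never how they are distributed.

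Concretely, translate so that $1\in S$ and take the following class profile:
\begin{itemize}
\item one element in $\{1\}$;
\item $(p+1)/2$ elements in each of the two classes of trace $-2$ other than $\{-I\}$;
\item $p+1$ elements in each split class;
\item arbitrary nonnegative integers summing to $(p^2-1)/2-1$ on the nonsplit classes.
\end{itemize}
This profile satisfies every linear condition you list, including the $S^{-1}$ condition and the exclusion of unipotents. So the LP over class profiles is feasible, and no dual certificate of the kind you describe exists. The paper itself remarks that even the full Delsarte LP over the trace association scheme fails to exclude $p=13$. Your fallback, ``divisibility or rationality conditions'', does not identify the extra information that is needed.

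There are also three smaller errors. First, an element $s\ne1$ of $S$ is fixed-point-free only after normalising $1\in S$; otherwise $S$ contains $p+1$ nontrivial unipotents. Second, the ``$g=hsh'$'' characterisation is wrong; the correct one is that $S$ is a transversal of every point stabiliser. Third, $\pi(\hat S)=J$ gives $\rho(\hat S)=0$ outright for each nontrivial constituent $\rho$, not merely $\rho(\hat S)P_\rho=0$.

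\textbf{The missing idea is modular.} You also need to work with the whole outer distribution $c_t(x)=|\{s\in S:\tr(x^{-1}s)=t\}|$ over all $x\in G$, rather than a single class profile.
\begin{itemize}
\item Over $\overline{\F}_p$, sharp transitivity gives $\sum_{s\in S}\Sym^d(s)=0$ for $1\le d\le p-2$. To see this, evaluate on the pure powers $v^d$, which span $\Sym^d$, and use $\sum_{\lambda\in\F_p^\ast}\lambda^d=0$.
\item The traces $E_d(t,1)$ are Dickson polynomials of degree $d$ in $t$, so they \emph{do} separate the nonsplit trace values.
\item Apply this to each translate $x^{-1}S$. Elementary eigenvector counts give $c_t(x)=p+1$ for split $t$, and $c_{\pm2}(x)$ is determined by $\mathbf 1_S(\pm x)$. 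Together these force $c_t(x)\equiv t^2-3\pmod p$ for every nonsplit $t$ and every $x$.
\item From the intersection numbers of the trace scheme, the paper computes the second moment
$\sum_{y\in G}c_a(y)^2=p(p^2+p-2)|S|-p\sum_{x\in S}c_b(x)$, where $b=a^2-2$.
\item Put $Q=a^2-3$ and let $R$ be the least residue of $b^2-3$. Then $(z-Q)(z-Q-p)\ge0$ for integers $z\equiv Q\pmod p$, and $c_b\ge R$. Summing produces a quantity that must be $\ge0$ but equals $-p|S|\bigl(Q(p-2-Q)+R-2p+2\bigr)<0$.
\end{itemize}
The bound $p\ge13$ enters only through the existence of $Q\in[4,p-6]$ with $Q+3$ a square and $Q-1$ a nonsquare, not through counting allowed classes.
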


Consequently, sharply transitive subsets of $\SL(2,q)$ in its
natural action on $\F_q^2\setminus\{0\}$ exist if and only if
$q\in\{2,3,5,7,11\}$. 
Equivalently, the corresponding maximal partial ovoids
of $Q(4,q)$ of size $q^2-1$ are exactly the known examples, settling the conjecture raised in \cite{CoolsaetDeBeuleSiciliano} in the affirmative.

\subsection*{Declaration of the use of AI}
Artificial intelligence played a substantive role in the development of the proof in this paper, and we wish to describe that role as precisely as possible. At an early stage of the project, we supplied ChatGPT (then using GPT-5.5) with a substantial collection of our ideas and calculations based on the association scheme approach to the problem. In exploring and extending these arguments, ChatGPT identified the modular representation-theoretic viewpoint that ultimately led to the present proof. Some of the intermediate results produced in this process were close to arguments that we had already obtained, or could have obtained routinely, from the association scheme. In particular, Lemma \ref{lem:outer-distribution} is essentially the outer-distribution calculation in that scheme. 
Lemma \ref{lem:intersection-numbers} is a standard computation of certain intersection numbers in the relevant association scheme, while the count of two-step walks in Proposition \ref{prop:two-step-walks} is a finer version of calculations we had made.
We therefore regard these parts of the argument principally as refinements and translations of ideas already present in our work.

The genuinely new step suggested by ChatGPT was Theorem \ref{thm:nonsplit-residue-general}: the congruence condition governing the nonsplit values. This observation was not present in our earlier approach and proved to be the decisive bridge from the association scheme calculations to the modular argument. More significantly still, the central mechanism of the final proof -- in particular, the introduction of the function $F_Q(z)$ and the argument built around it -- was discovered by ChatGPT. This was not a reformulation of an argument previously known to us, but a genuinely new proof strategy proposed during our interaction with the model. We subsequently checked the argument in detail, supplied the necessary mathematical justification, and developed it into the form presented here.

Finally, we mention that the authors had applied \emph{Delsarte-theory}, to the ultimate end for small examples (e.g., $q=13$) to no avail. The LP-bound, outer distributions, and other tools from ordinary representation theory seemed to not disallow the existence of a sharply transitive subset of $\SL(2,q)$, and it was the intervention of modular representation theory that cracked the problem open: something that, to the best of our knowledge, has rarely been seen before in the literature.

\section{Trace relations and elementary counts}

For the remainder of the main proof, let $p$ be an odd prime and put
$G=\SL(2,p)$.  Let $\chi$ denote the quadratic character of $\F_p$, extended
by $\chi(0)=0$, and put
\[
\begin{aligned}
        \Sigma_{\mathrm{sp}}
        &=\{t\in\F_p:\chi(t^2-4)=1\},\\
        \Sigma_{\mathrm{ns}}
        &=\{t\in\F_p:\chi(t^2-4)=-1\}.
\end{aligned}
\]

The abbreviations here stand for `split' and `nonsplit' trace values, in regard to the possible eigenvalues of
the given matrix. We will say that a matrix has \textbf{split} trace value if it has two eigenvalues in $\F_p$,
and \textbf{nonsplit} if those two eigenvalues lie in $\F_{p^2}\backslash\F_p$.
Indeed, if $g\in G$ has trace
$t$, then its characteristic polynomial is
\[
        X^2-tX+1,
\]
with discriminant $t^2-4$.  Hence $t\in\Sigma_{\mathrm{sp}}$ exactly when $g$
has two distinct eigenvalues in $\F_p$, while $t\in\Sigma_{\mathrm{ns}}$
exactly when its eigenvalues lie in $\F_{p^2}\setminus\F_p$.  The excluded
values $t=\pm2$ are precisely the repeated-root cases.

We briefly recall the conjugacy classes of $G=\SL(2,p)$, following
Humphreys \cite{Humphreys1975}. Fix a generator $\nu$ of $\F_p^\ast$, and
put
\[
 z=-I,\qquad
 a=\begin{pmatrix}\nu&0\\0&\nu^{-1}\end{pmatrix},\qquad
 c=\begin{pmatrix}1&1\\0&1\end{pmatrix},\qquad
 d=\begin{pmatrix}1&\nu\\0&1\end{pmatrix}.
\]
Let $b$ be an element of order $p+1$ which is not diagonalisable over
$\F_p$.  Then the $p+4$ conjugacy classes of $G$ have representatives
\[
  1,\quad z,\quad
  a^\ell\ \left(1\le \ell\le\frac{p-3}{2}\right),\quad
  b^m\ \left(1\le m\le\frac{p-1}{2}\right),\quad
  c,\ d,\ zc,\ zd.
\]
The classes represented by $1,z,a^\ell,b^m$ are precisely the
$p$-regular classes in the terminology of Humphreys. Thus the $a^\ell$ are
the split regular semisimple elements, having two distinct eigenvalues in
$\F_p$, whereas the $b^m$ are the nonsplit regular semisimple elements,
whose eigenvalues lie in $\F_{p^2}\setminus\F_p$. Their conjugacy classes
have sizes respectively $p(p+1)$ and $p(p-1)$. The two nonidentity
unipotent classes are represented by $c,d$, and the two classes with trace
$-2$ by $zc,zd$; each of these four classes has size $(p^2-1)/2$.
Consequently, the regular trace values split into the two families
$\Sigma_{\mathrm{sp}}$ and $\Sigma_{\mathrm{ns}}$, while $\pm2$ are the two
nonregular trace values.
In particular, the split values correspond to the
classes of the $a^\ell$, and the nonsplit values to the classes of the
$b^m$. Indeed, 
\begin{equation}
        |\Sigma_{\mathrm{sp}}|=\frac{p-3}{2},
        \qquad
        |\Sigma_{\mathrm{ns}}|=\frac{p-1}{2}.
        \label{eq:split-nonsplit-counts}
\end{equation}

\begin{Definition}
\label{def:trace-relations}
For $x,y\in G$, define
\[
\begin{aligned}
        x\,R_\delta\,y&\iff x=y,\\
        x\,R_\infty\,y&\iff y=-x,\\
        x\,R_t\,y&\iff y\neq\pm x\text{ and }\tr(x^{-1}y)=t
        \qquad(t\in\F_p).
\end{aligned}
\]
If $S\subseteq G$ is sharply transitive, put
\[
        c_i(x)=|\{s\in S:x\,R_i\,s\}|,
\]
and write
\[
        \epsilon_x=c_\delta(x)=\mathbf 1_S(x),
        \qquad
        \alpha_x=c_\infty(x)=\mathbf 1_S(-x).
\]
\end{Definition}

The trace relations form a symmetric association scheme.  Indeed,
$\tr(g^{-1})=\tr(g)$ for $g\in\SL(2,p)$, so each $R_t$ is symmetric.  The
expert reader with a background in association schemes will recognise the
$c_i$ as values of the \emph{outer distribution} of $S$.

\begin{Lemma}
\label{lem:outer-distribution}
Let $S\subseteq G$ be sharply transitive on $\F_p^2\setminus\{0\}$. Then,
for every $x\in G$, $c_2(x)=(p+1)(1-\epsilon_x)$, $c_{-2}(x)=(p+1)(1-\alpha_x)$, and $c_t(x)=p+1$ for all $t\in\Sigma_{\mathrm{sp}}$.
\end{Lemma}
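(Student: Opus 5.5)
The plan is to reduce to $x=1$ by translating $S$, and then to count incidences between elements of $S$ and their eigenvectors. Sharp transitivity says that for any fixed scalar $\lambda$ and any $v\in V^\#$ there is exactly one $s\in S$ with $v^s=\lambda v$. Counting pairs $(s,v)$ in two ways will then give the three formulas.

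\emph{Step 1 (reduction).} Fix $x\in G$ and put $T=x^{-1}S$. The map $v\mapsto v^{x^{-1}}$ is a bijection of $V^\#$. So for each pair $(\alpha,\beta)$ there is a unique $s\in S$ with $(\alpha^{x^{-1}})^{s}=\beta$, hence a unique $x^{-1}s\in T$ sending $\alpha$ to $\beta$. Thus $T$ is again sharply transitive. Under $s\mapsto x^{-1}s$:
\begin{itemize}
\item $x\,R_t\,s$ with $t\in\F_p$ becomes $u\in T$, $u\neq\pm1$, $\tr(u)=t$;
\item $\epsilon_x=\mathbf 1_T(1)$;
\item $\alpha_x=\mathbf 1_T(-1)$.
\end{itemize}
So it suffices to show that, for a sharply transitive $T$, the number of $u\in T\setminus\{\pm1\}$ with $\tr(u)=t$ equals $(p+1)(1-\mathbf 1_T(1))$ for $t=2$, $(p+1)(1-\mathbf 1_T(-1))$ for $t=-2$, and $p+1$ for $t\in\Sigma_{\mathrm{sp}}$.

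\emph{Step 2 (double count).} Fix $\lambda\in\F_p^\ast$ and let
\[
N_\lambda=|\{(u,v)\in T\times V^\#: v^u=\lambda v\}|.
\]
Taking $\beta=\lambda\alpha$ in sharp transitivity, each $v$ has exactly one partner $u$, so $N_\lambda=p^2-1$. We then count by $u$ instead, using that $u\in\SL(2,p)$ has eigenvalue $\lambda$ if and only if its eigenvalues are $\lambda,\lambda^{-1}$, i.e.\ if and only if $\tr(u)=\lambda+\lambda^{-1}$.

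\begin{itemize}
\item $\lambda=1$: the identity contributes $p^2-1$ pairs. A non-identity element of trace $2$ is a transvection, whose fixed space is a line, so it contributes $p-1$ pairs. No other element contributes. Hence
\[
(p^2-1)\mathbf 1_T(1)+(p-1)c=p^2-1,
\]
where $c$ is the count we want, and so $c=(p+1)(1-\mathbf 1_T(1))$.
\item $\lambda=-1$: the same argument applies, with $-I$ playing the role of the identity and $-1$ times a transvection (trace $-2$) contributing $p-1$ pairs each.
\item $t\in\Sigma_{\mathrm{sp}}$: write $t=\lambda+\lambda^{-1}$ with $\lambda\neq\pm1$. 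Neither $\pm I$ has eigenvalue $\lambda$. Each $u$ of trace $t$ has $\lambda\neq\lambda^{-1}$, so its $\lambda$-eigenspace is exactly a line and it contributes $p-1$ pairs. Hence $(p-1)c_t=p^2-1$, giving $c_t=p+1$.
\end{itemize}

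There is no real obstacle here. The only points needing care are the left/right convention in the translation step and the check that every element of $\SL(2,p)$ with eigenvalue $\lambda$ has exactly $p-1$ eigenvectors for $\lambda$, apart from $\pm I$ when $\lambda=\pm1$. This holds because a non-scalar element of $\SL(2,p)$ with repeated eigenvalue is not diagonalisable.
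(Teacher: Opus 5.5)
Your proposal is correct and is essentially the paper's proof: the paper double counts pairs $(v,s)$ with $sv=\lambda xv$, i.e.\ $\lambda$-eigenvectors of $x^{-1}s$, for $\lambda=1$, $\lambda=-1$, and $\lambda\neq\pm1$ with $t=\lambda+\lambda^{-1}$, exactly as you do. The only difference is that the paper builds the translation by $x$ into the counted set, whereas you first check separately that $x^{-1}S$ is again sharply transitive.
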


\begin{proof}
Fix $x\in G$ and $\lambda\in\F_p^\ast$. Count the set
\[
\mathcal P_\lambda(x)
=
\{(v,s)\in(\F_p^2\setminus\{0\})\times S:
  sv=\lambda xv\}.
\]
For every nonzero $v$, sharp transitivity gives a unique $s\in S$ with
$sv=\lambda xv$, so
\begin{equation}
        |\mathcal P_\lambda(x)|=p^2-1.
        \label{eq:eigenvector-count-total}
\end{equation}

Take first $\lambda=1$. The element $s=x$, when it belongs to $S$,
contributes all $p^2-1$ nonzero vectors. Every other $s$ for which
$x^{-1}s$ has eigenvalue $1$ is a nonidentity unipotent element of trace
$2$ and contributes the $p-1$ nonzero vectors in its one-dimensional fixed
space. These are exactly the elements counted by $c_2(x)$. Hence
\[
        p^2-1=\epsilon_x(p^2-1)+c_2(x)(p-1),
\]
which gives the formula for $c_2(x)$.

The same argument with $\lambda=-1$ shows that $s=-x$ contributes
$p^2-1$ vectors, while each noncentral element of trace $-2$ contributes
$p-1$ vectors. Thus
\[
        p^2-1=\alpha_x(p^2-1)+c_{-2}(x)(p-1).
\]

Finally, let $\lambda\neq\pm1$ and put $t=\lambda+\lambda^{-1}$. Then
\[
        t^2-4=(\lambda-\lambda^{-1})^2
\]
is a nonzero square. Every element $x^{-1}s$ of trace $t$ has the distinct
eigenvalues $\lambda$ and $\lambda^{-1}$, and therefore contributes exactly
$p-1$ nonzero $\lambda$-eigenvectors to $\mathcal P_\lambda(x)$. Conversely,
every $t\in\Sigma_{\mathrm{sp}}$ arises in this way. Thus
\eqref{eq:eigenvector-count-total} gives
\[
        p^2-1=c_t(x)(p-1),
\]
and hence $c_t(x)=p+1$.
\end{proof}

The following is a basic fact in the theory of finite fields
\cite[Theorem 5.48]{finitefields}.

\begin{Lemma}
\label{lem:quadratic-character-sum}
For every $z\in\F_p^\ast$, we have
$\sum_{t\in\F_p}\chi(t^2-z)=-1$.
\end{Lemma}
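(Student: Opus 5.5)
The plan is to split off the two values of $t$ that make $t^2-z$ vanish and then evaluate a complete quadratic character sum via multiplicativity. Here $p$ is an odd prime and $z\in\F_p^\ast$.

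First, reduce the sum to counting points on a conic. For each $u\in\F_p$, the number of $s\in\F_p$ with $s^2=u$ is $1+\chi(u)$; this holds also at $u=0$ because $\chi(0)=0$. Hence
\[
\sum_{t\in\F_p}\chi(t^2-z)=N-p,
\qquad
N=|\{(t,s)\in\F_p^2: t^2-s^2=z\}|,
\]
and it suffices to show $N=p-1$.

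Second, count $N$ by the invertible linear change of variables $a=t-s$, $b=t+s$, which is valid since $p$ is odd. The equation $t^2-s^2=z$ becomes $ab=z$. Since $z\neq0$, $a$ ranges over $\F_p^\ast$ and $b=z/a$ is then determined, so $N=p-1$. Therefore the sum equals $(p-1)-p=-1$.

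Alternatively, one can argue multiplicatively. Fix a square root $w$ of $z$ if one exists, and substitute $t\mapsto t/\sqrt{\cdot}$ after factoring out $\chi(t^2)$ for $t\neq0$. This gives $\sum_{t\ne0}\chi(1-z t^{-2})$, where $t^{-2}$ runs over the nonzero squares twice. The result then follows from $\sum_{u}\chi(u)=0$ and $\sum_u\chi(1-zu)=0$. Either route is elementary. The only step needing care is the bookkeeping at $u=0$ (that is, $t=\pm w$), which the conic count handles automatically, so I would present the conic argument and not expect any real obstacle.
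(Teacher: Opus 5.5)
Your main argument is correct and complete. Since $|\{s:s^2=u\}|=1+\chi(u)$ for every $u$ (including $u=0$), the sum equals $N-p$, where $N$ is the number of points of $t^2-s^2=z$. The substitution $(t,s)\mapsto(t-s,t+s)$ is invertible because $p$ is odd, and it turns this equation into $ab=z$, which has exactly $p-1$ solutions since $z\neq0$.

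The paper itself gives no proof. It cites \cite[Theorem 5.48]{finitefields}, which evaluates $\sum_t\chi(a_2t^2+a_1t+a_0)$ for an arbitrary quadratic: the value is $-\chi(a_2)$ for nonzero discriminant and $(p-1)\chi(a_2)$ otherwise. Completing the square there reduces to precisely the sum in this lemma. Your hyperbola count is the standard way to evaluate that reduced sum. So your route gives a self-contained proof of the one special case the paper needs, while the citation gives the general statement.

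Do not rely on your ``alternative'' route as written, though. Dropping $t=0$ loses the term $\chi(-z)$. Also, because $t^{-2}$ covers each nonzero square twice, $\sum_{t\ne0}\chi(1-zt^{-2})=\sum_{u\ne0}(1+\chi(u))\chi(1-zu)$. The cross term $\sum_{u}\chi(u)\chi(1-zu)$ does not follow from $\sum_u\chi(u)=0$ and $\sum_u\chi(1-zu)=0$. After completing the square it is again a sum of the very type you are proving, so invoking the lemma there would be circular.

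The alternative can be repaired. For $u\ne0$, write $\chi(u(1-zu))=\chi(u^{-1}-z)$; summing gives $-\chi(-z)$, which cancels the $t=0$ term. The remaining piece $\sum_{u\ne0}\chi(1-zu)=-1$ then supplies the answer. Since you say you would present the conic argument, either omit the alternative or fix it this way.
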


The rest of this section is devoted to giving a precise
equation for $\sum_{y\in G}c_a(y)^2$, which corresponds to counting
two-step walks (see Proposition
\ref{prop:two-step-walks}). 
First we compute, in the language of association schemes, the intersection numbers $p_{aa}^h$.

\begin{Lemma}
\label{lem:intersection-numbers}
Let $a\in\F_p$ satisfy $\chi(a^2-4)=-1$ and $a\ne0$, and put $b:=a^2-2$.
For $h\in G$, define
\[
        N_a(h):=
        |\{y\in G:\tr(y)=a,\ \tr(y^{-1}h)=a\}|.
\]
Then $b^2-4$ is a nonsquare, and the values of $N_a(h)$ are listed in
Table \ref{tab:intersection-numbers}.
\begin{table}[!h]
\centering
\begin{tabular}{ll}
\toprule
Condition on $h$&$N_a(h)$\\
\midrule
$h=I$&$p(p-1)$\\
$h=-I$&$0$\\
$h\ne I$, $\tr(h)=2$&$0$\\
$h\ne-I$, $\tr(h)=-2$&$p$\\
$\tr(h)\in\Sigma_{\mathrm{sp}}$&$p-1$\\
$\tr(h)=b$&$1$\\
$\tr(h)\in\Sigma_{\mathrm{ns}}\setminus\{b\}$&$p+1$.\\
\bottomrule
\end{tabular}\smallskip
\caption{Values of $N_a(h)$ depending on $h$.}
\label{tab:intersection-numbers}
\vspace{-2em}
\end{table}
\end{Lemma}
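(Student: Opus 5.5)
The value $N_a(h)$ depends only on the conjugacy class of $h$: replacing $h$ by $ghg^{-1}$ and $y$ by $gyg^{-1}$ is a bijection that preserves both trace conditions. So the plan is to fix one convenient representative per class and count directly. We parametrise $y=\begin{pmatrix}y_1&y_2\\y_3&y_4\end{pmatrix}$ with $y_1+y_4=a$ and $y_1y_4-y_2y_3=1$. Since $y^{-1}=\begin{pmatrix}y_4&-y_2\\-y_3&y_1\end{pmatrix}$, the second condition $\tr(y^{-1}h)=a$ is an affine-linear equation in the entries of $y$. Hence $N_a(h)$ counts the points of a conic (the trace-$a$ class, a quadric surface) cut by a hyperplane, and the problem reduces to counting points on plane conics over $\F_p$, weighted through $\chi$.

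First the preliminary claim: $b^2-4=(a^2-2)^2-4=a^2(a^2-4)$, which is a nonsquare because $a\ne0$ and $a^2-4$ is a nonsquare.

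The cases $h=\pm I$ are immediate. For $h=I$ the second condition is automatic, so $N_a(I)$ is the size of the trace-$a$ class, namely $p(p-1)$, since $a\in\Sigma_{\mathrm{ns}}$. For $h=-I$ the second condition reads $-a=a$, which is impossible as $a\ne0$ and $p$ is odd.

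For the general case we use diagonal, unipotent, or companion-type representatives.

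\emph{Split $h$.} Take $h=\mathrm{diag}(\lambda,\lambda^{-1})$. The condition becomes $\lambda y_4+\lambda^{-1}y_1=a$, which together with $y_1+y_4=a$ determines $y_1,y_4$ uniquely. Then $y_2y_3=y_1y_4-1$ has $p-1$ solutions if $y_1y_4\neq1$ and $2p-1$ solutions if $y_1y_4=1$. We need to rule out $y_1y_4=1$: in that case $y_1,y_4$ would be roots of $X^2-aX+1$, contradicting the fact that $a^2-4$ is a nonsquare. This gives $p-1$.

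\emph{Unipotent $h$.} Take $h=\pm\begin{pmatrix}1&u\\0&1\end{pmatrix}$ with $u\ne0$. The condition becomes $\pm(a-uy_3)=a$.
\begin{itemize}
\item With sign $+$ we get $y_3=0$, so $y_1y_4=1$ with $y_1+y_4=a$, which has no solutions. Hence $N_a(h)=0$.
\item With sign $-$ we get $uy_3=2a$, so $y_3\ne0$ is fixed. Then $y_1$ is free and $y_2$ is determined, giving $p$ solutions.
\end{itemize}

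\emph{Nonsplit $h$.} Take $h$ of trace $s$ in companion form $\begin{pmatrix}0&-1\\1&s\end{pmatrix}$. The linear condition eliminates one variable, say $y_2$ in terms of $y_3$ and $y_1$, and then $y_4=a-y_1$. Substituting into the determinant condition leaves a quadratic equation in two variables $(y_1,y_3)$. We count its affine points by completing the square, which yields a sum of the form $\sum_{y}(1+\chi(Q(y)))$. Lemma~\ref{lem:quadratic-character-sum} evaluates this sum, except in degenerate cases where the conic splits into lines or becomes a double line.

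We expect the main obstacle to be identifying exactly when the conic degenerates. This should occur precisely when $s=b=a^2-2$: then $y^2=h$ has solutions, so the trace-$a$ class meets the fibre in a degenerate way and the count collapses to $1$. In the nondegenerate case the count should be $p+1$, a nonsplit conic minus no points at infinity.

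As a cross-check on all the values, we sum over all $h$. We must have $\sum_h N_a(h)=(p(p-1))^2$, and each nonsplit class of $h$ should contribute consistently. We also verify the split case $\tr(h)=s$ through the discriminant computation, to confirm that it gives $p-1$ rather than $p+1$.
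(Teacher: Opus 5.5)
Your strategy is the paper's: reduce to one representative per class, note that $\tr(y^{-1}h)=a$ is linear in the entries of $y$, and count points on the resulting affine conic. Your central, unipotent and split cases are correct as written. For split $h$ you use a diagonal representative and a $2\times2$ linear solve, whereas the paper treats split and nonsplit $h$ together via the companion matrix and the count $p-\chi(\tau^2-4)$.

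The gap is the nonsplit case. This is the only place where the nontrivial row of the table, $N_a(h)=1$ for $\tr(h)=b$, arises, and you only state what the answer ``should'' be. The justification you offer does not suffice. Solvability of $y^2=h$ only gives $N_a(h)\ge1$: indeed $y=\begin{pmatrix}1/a&-1/a\\1/a&a-1/a\end{pmatrix}$ has trace $a$, determinant $1$, and $y^2=ay-I=\begin{pmatrix}0&-1\\1&b\end{pmatrix}$. It says nothing about further solutions. Moreover, the degenerations you mention (two rational lines, or a double line) would give $2p-1$ or $p$ points, not $1$.

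What is missing is the explicit centring of the conic. With $y_2=y_3+sy_1-a$ and $y_4=a-y_1$, the determinant condition becomes $y_1^2+y_3^2+sy_1y_3-ay_1-ay_3+1=0$. Its centre is $y_1=y_3=a/(s+2)$ (recall $s\ne-2$), and translating gives
\[
Y_1^2+Y_3^2+sY_1Y_3=\frac{a^2-s-2}{s+2}.
\]
So degeneracy occurs exactly when $s=a^2-2=b$.

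For $s\ne b$, completing the square and Lemma~\ref{lem:quadratic-character-sum} give $p-\chi(s^2-4)=p+1$. For $s=b$, the left-hand side is a binary form of discriminant $b^2-4$, which is a nonsquare. Hence the form is anisotropic and $(Y_1,Y_3)=(0,0)$ is the only solution. Geometrically, the conic is a pair of lines conjugate over $\F_{p^2}$ meeting in a single rational point. This is precisely where the preliminary fact that $b^2-4$ is a nonsquare is needed, and your proposal never uses it.

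Alternatively, once the $p+1$ row is proved for every nonsplit $s\ne b$, your cross-check $\sum_h N_a(h)=p^2(p-1)^2$ does force $N_a(h)=1$ on the single class of trace $b$. That route still needs the constant-term computation above, to know that $b$ is the only exceptional trace.
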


\begin{proof}
We have $b^2-4=a^2(a^2-4)$, which is a nonsquare. 
Note that the number $N_a(h)$ is invariant under conjugating $h$ in $\GL(2,p)$.
Write
\[
        y=\begin{pmatrix}u&v\\w&a-u\end{pmatrix}.
\]

First suppose $h=I$. 
The determinant equation $\det(y)=1$ is
\[
        vw=u(a-u)-1.
\]
The right-hand side never vanishes, since $u^2-au+1$ has the nonsquare
discriminant $a^2-4$. Thus for each of the $p$ choices of $u$ there are
$p-1$ choices for $(v,w)$, and $N_a(I)=p(p-1)$. If $h=-I$, the second
trace condition reads $-a=a$, which is impossible because $a\ne0$.

Now let
\[
        h_\sigma=\begin{pmatrix}\sigma&t\\0&\sigma\end{pmatrix},
        \qquad t\ne0,
        \qquad \sigma\in\{1,-1\}.
\]
These matrices represent the noncentral trace-$2\sigma$ cases. A direct
calculation gives
\[
        \tr(y^{-1}h_\sigma)=\sigma a-tw.
\]
For $\sigma=1$, the second trace condition forces $w=0$, after which
$\det(y)=1$ becomes $u^2-au+1=0$, which has no solution. For $\sigma=-1$,
the condition forces $w=-2a/t\ne0$; for each $u\in\F_p$, the determinant
equation then determines $v$ uniquely. This gives the third and fourth rows
of Table \ref{tab:intersection-numbers}.

It remains to consider a regular semisimple element $h$ of trace
$\tau\ne\pm2$. Since $h$ is not a scalar matrix, choose $v_0$ such that
$\{v_0,hv_0\}$ is a basis. By Cayley--Hamilton,
$h^2-\tau h+I=0$, so with respect to this basis
\[
        h=\begin{pmatrix}0&-1\\1&\tau\end{pmatrix}.
\]
Now the condition $\tr(y^{-1}h)=a$ gives
$w=a+v-\tau u$. Substituting into $\det(y)=1$ yields
\begin{equation}
        u^2+v^2-\tau uv-au+av+1=0.
        \label{eq:affine-conic-before-translation}
\end{equation}
The translation
\[
        u=U+\frac{a}{\tau+2},
        \qquad
        v=V-\frac{a}{\tau+2}
\]
turns \eqref{eq:affine-conic-before-translation} into
\begin{equation}
        U^2+V^2-\tau UV
        =\frac{a^2-\tau-2}{\tau+2}.
        \label{eq:affine-conic-normal-form}
\end{equation}
Put $\Delta=\tau^2-4$. For a nonzero right-hand side $c_0$, the number of
solutions of
$U^2+V^2-\tau UV=c_0$ is $p-\chi(\Delta)$. Indeed,
\[
        4(U^2+V^2-\tau UV)=(2U-\tau V)^2-\Delta V^2,
\]
and Lemma~\ref{lem:quadratic-character-sum} gives
\[
\begin{aligned}
&\left|\left\{(U,V):(2U-\tau V)^2-\Delta V^2=4c_0\right\}\right|\\
&\hspace{3em}=
\sum_{V\in\F_p}\bigl(1+\chi(\Delta V^2+4c_0)\bigr)
=p-\chi(\Delta).
\end{aligned}
\]
Thus the answer is $p-1$ when $\tau\in\Sigma_{\mathrm{sp}}$ and $p+1$
when $\tau\in\Sigma_{\mathrm{ns}}$. The right-hand side of
\eqref{eq:affine-conic-normal-form} vanishes exactly when
$\tau=a^2-2=b$. In this case
\[
        \Delta=b^2-4=a^2(a^2-4)
\]
is a nonsquare. Hence the homogeneous binary quadratic form is anisotropic
and has only the zero solution. This gives $N_a(h)=1$ at the exceptional
trace value $b$ and completes the table.
\end{proof}

\begin{Proposition}
\label{prop:two-step-walks}
Under the hypotheses of Lemma~\ref{lem:intersection-numbers}, if
$S\subseteq G$ is sharply transitive, then
\[
        \sum_{y\in G}c_a(y)^2
        =p(p^2+p-2)|S|-p\sum_{x\in S}c_b(x).
\]
\end{Proposition}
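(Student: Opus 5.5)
The plan is to expand the square as a double sum over pairs of elements of $S$, recognise the inner count as an intersection number $N_a$ from Lemma~\ref{lem:intersection-numbers}, and then evaluate the resulting sum using the outer distribution of Lemma~\ref{lem:outer-distribution}.

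\emph{Removing the side condition.} Since $\chi(a^2-4)=-1$, we have $a\neq\pm2$. Hence $\tr(y^{-1}s)=a$ already forces $s\neq\pm y$, and so
\[
c_a(y)=|\{s\in S:\tr(y^{-1}s)=a\}|.
\]

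\emph{Reducing to $N_a$.} Squaring and summing over $y$ gives
\[
\sum_y c_a(y)^2=\sum_{x,x'\in S}|\{y\in G:\tr(y^{-1}x)=a,\ \tr(y^{-1}x')=a\}|.
\]
In $\SL(2,p)$ we have $\tr(g^{-1})=\tr(g)$, so $\tr(y^{-1}x)=\tr(x^{-1}y)$. Substitute $y=xw$. The first condition becomes $\tr(w)=a$, and the second becomes $\tr(w^{-1}x^{-1}x')=a$. So the inner count is exactly $N_a(x^{-1}x')$. For fixed $x\in S$, I will then sort the $x'\in S$ according to the relation $R_i$ between $x$ and $x'$ and read off $N_a$ from Table~\ref{tab:intersection-numbers}. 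The contributions are:
\begin{itemize}
\item $x'=x$ contributes $p(p-1)$;
\item $x'=-x$, and $x'$ with $\tr(x^{-1}x')=2$, contribute $0$;
\item each $x'$ with $\tr(x^{-1}x')=-2$ contributes $p$, for a total of $p\,c_{-2}(x)$;
\item each split $t$ contributes $(p-1)c_t(x)$;
\item the trace value $b$ contributes $c_b(x)$;
\item each remaining nonsplit $t$ contributes $(p+1)c_t(x)$.
\end{itemize}

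\emph{Evaluating the outer distribution at $x\in S$.} I need the values $c_i(x)$ for $x\in S$, and this is the only point requiring care.
\begin{itemize}
\item Here $\epsilon_x=1$, so $c_2(x)=0$.
\item Also $\alpha_x=0$. If both $x$ and $-x$ lay in $S$, then for any $v\neq0$ the pair $(v,-xv)$ would be realised by $-x$, and the pair $(v,xv)$ by $x$. That alone is no contradiction, so instead I apply Lemma~\ref{lem:outer-distribution} with $\lambda=-1$ at $x$. If $-x\in S$ then $\alpha_x=1$, and the $-x$ term consumes all $p^2-1$ vectors; the $\lambda=1$ count is similarly consumed by $x$ itself. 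Together these force $|S|\ge$ the sum of all $c_i(x)$ with equality accounting. I then check via $|S|=p^2-1$ that this is inconsistent. Failing that, I simply note that $-x$ has the same number of $\pm$-eigenvectors, and I fall back to keeping $\alpha_x$ as a variable. The identity then only needs the total, and I verify that the $\alpha_x$ terms cancel.
\item Assuming $\alpha_x=0$, we get $c_{-2}(x)=p+1$ and $c_t(x)=p+1$ on $\Sigma_{\mathrm{sp}}$.
\item The nonsplit total then follows from $|S|=p^2-1=\sum_i c_i(x)$ together with \eqref{eq:split-nonsplit-counts}:
\[
\sum_{t\in\Sigma_{\mathrm{ns}}}c_t(x)=\frac{(p+1)(p-1)}{2}-1.
\]
\end{itemize}

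\emph{Final computation.} Substituting these values, the contribution of a fixed $x\in S$ is
\[
p(p-1)+p(p+1)+\frac{(p^2-1)(p-3)}{2}+(p+1)\Bigl(\frac{p^2-1}{2}-1\Bigr)-p\,c_b(x).
\]
This simplifies to $p(p^2+p-2)-p\,c_b(x)$. Summing over $x\in S$ gives the stated identity.

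I expect the main obstacle to be the justification of $\alpha_x=0$ for $x\in S$, that is, that $S\cap(-S)=\emptyset$. The cleanest route is probably a direct check that the $\alpha_x$-dependence cancels in the final sum. The rest is routine bookkeeping.
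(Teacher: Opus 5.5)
\textbf{There is a genuine gap: the step ``$\alpha_x=0$ for $x\in S$'' is false, and your final computation rests on it.} Apart from that, your route is the paper's. You double count $\sum_y c_a(y)^2=\sum_{x,x'\in S}N_a(x^{-1}x')$, sort $x'$ by its relation to $x$ using Table~\ref{tab:intersection-numbers}, and eliminate the unknown nonsplit coordinates with Lemma~\ref{lem:outer-distribution} and $|S|=p^2-1$. The removal of the side condition, the substitution $y=xw$ and the table lookups are all fine.

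A counterexample satisfying the hypotheses of the Proposition: take $p=5$ and $a=1$, so $a^2-4=2$ is a nonsquare and $a\neq0$. The sharply transitive subgroup $S\cong 2.A_4$ of $\SL(2,5)$ has even order. It therefore contains the unique involution $-I$ of $\SL(2,5)$, so $S=-S$ and $\alpha_x=1$ for every $x\in S$.

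Your proposed justification could not have worked either. When $-x\in S$, the $\lambda=-1$ count in Lemma~\ref{lem:outer-distribution} just gives $c_{-2}(x)=0$. Together with $c_2(x)=0$ and the split values, this is perfectly consistent with $|S|=p^2-1$; the nonsplit total simply becomes $\frac{(p+3)(p-1)}{2}$ instead of $\frac{p^2-3}{2}$. So your term $p\,c_{-2}(x)=p(p+1)$ and your nonsplit total $\frac{p^2-3}{2}$ are both wrong whenever $-x\in S$.

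The repair is the fallback you mention but do not carry out, and it is exactly what the paper does. The paper computes $|\mathcal M_a(x)|=2p^2\epsilon_x+p(p^2-p-2)-p\,c_b(x)$ for arbitrary $x\in G$, leaving both $\epsilon_x$ and $\alpha_x$ free, and only sets $\epsilon_x=1$ when summing over $S$. Concretely, with $\epsilon_x=1$ and $\alpha_x$ unspecified:
\begin{itemize}
\item $c_{-2}(x)=(p+1)(1-\alpha_x)$;
\item the total-size equation gives $\sum_{t\in\Sigma_{\mathrm{ns}}}c_t(x)=\frac{p^2-3}{2}+p\alpha_x$;
\item so the $\alpha_x$-terms in $p\,c_{-2}(x)+(p+1)\sum_{t\in\Sigma_{\mathrm{ns}}}c_t(x)$ are $-p(p+1)\alpha_x+p(p+1)\alpha_x=0$.
\end{itemize}
Hence the contribution of each $x\in S$ is $p(p^2+p-2)-p\,c_b(x)$ whatever $\alpha_x$ is. With this computation replacing the claim $S\cap(-S)=\emptyset$, your argument is complete.
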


\begin{proof}
For $x\in G$, let
\[
\mathcal M_a(x):=
\{(y,s)\in G\times S:
  \tr(x^{-1}y)=a,\ \tr(y^{-1}s)=a\}.
\]
For a fixed $s\in S$, left multiplication by $x^{-1}$ shows that the
number of possible $y$ is $N_a(x^{-1}s)$. Applying Table
\ref{tab:intersection-numbers} according to the trace relation between
$x$ and $s$ gives
\begin{align*}
|\mathcal M_a(x)|
={}&p(p-1)\epsilon_x+p\,c_{-2}(x)
 +(p-1)\sum_{t\in\Sigma_{\mathrm{sp}}}c_t(x)
 +(p+1)\sum_{t\in\Sigma_{\mathrm{ns}}}c_t(x)
 -p\,c_b(x).
\end{align*}
The final term is the deficit of $p$ at the exceptional trace value
$b$: there is one midpoint there, rather than the generic $p+1$.
By Lemma~\ref{lem:outer-distribution} and
\eqref{eq:split-nonsplit-counts},
\[
        c_2(x)=(p+1)(1-\epsilon_x),
        \qquad
        c_{-2}(x)=(p+1)(1-\alpha_x),
\]
and
\[
        \sum_{t\in\Sigma_{\mathrm{sp}}}c_t(x)
        =\frac{p-3}{2}(p+1).
\]
Using also
\[
|S|=\epsilon_x+\alpha_x+c_2(x)+c_{-2}(x)
 +\sum_{t\in\Sigma_{\mathrm{sp}}}c_t(x)
 +\sum_{t\in\Sigma_{\mathrm{ns}}}c_t(x),
\]
a straightforward simplification gives
\begin{equation}
        |\mathcal M_a(x)|
        =2p^2\epsilon_x+p(p^2-p-2)-p\,c_b(x).
        \label{eq:pointwise-two-walk-count}
\end{equation}

Now sum over $x\in S$. For a fixed midpoint $y$, there are $c_a(y)$
choices for the first endpoint $x\in S$ and independently $c_a(y)$ choices
for the second endpoint $s\in S$. Hence
\[
        \sum_{x\in S}|\mathcal M_a(x)|
        =\sum_{y\in G}c_a(y)^2.
\]
Summing \eqref{eq:pointwise-two-walk-count} over $x\in S$ gives
\[
        \sum_{y\in G}c_a(y)^2
        =p(p^2+p-2)|S|-p\sum_{x\in S}c_b(x),
\]
as required.
\end{proof}

\section{A modular congruence at nonsplit trace values}

The elementary count in Lemma~\ref{lem:outer-distribution} determines the
local distribution at the two nonregular trace values $\pm2$ and at every
regular split trace value. It does not determine the individual coordinates
with trace in $\Sigma_{\mathrm{ns}}$. The important result in this section is
Theorem~\ref{thm:nonsplit-residue-general}, which gives their residues
modulo $p$.

Let $k$ be the algebraic closure of $\F_p$, let $W=k^2$ be the natural
$kG$-module, and, for $d\ge0$, put
\[
        W_d:=\Sym^d(W).
\]
We realise $W_d$ as the space of homogeneous polynomials of degree $d$ in
two variables, and write $\rho_d:G\to\GL(W_d)$ for the induced
representation. The Dickson polynomials of the second kind are the
polynomials $E_d(T,c)\in\Z[T,c]$ defined by
\[
        E_0(T,c)=1,
        \qquad
        E_1(T,c)=T,
        \qquad
        E_d(T,c)=TE_{d-1}(T,c)-cE_{d-2}(T,c).
\]
In particular,
\[
E_d(\lambda+\lambda^{-1},1)
=\lambda^d+\lambda^{d-2}+\cdots+\lambda^{-d}.
\]
If $g\in\GL(2,k)$ has eigenvalues
$\lambda,\mu$, then its eigenvalues on $W_d$ are 
\[
\lambda^d,\lambda^{d-1}\mu,\ldots,\lambda\mu^{d-1},\mu^d.
\]
Consequently, $\tr\rho_d(g)=E_d(\tr(g),\det(g))$, and, for
$g\in\SL(2,p)$, we have $\tr\rho_d(g)=E_d(\tr(g),1)$.

\begin{Remark}
The irreducible $p$-modular representations of $\SL(2,p)$ are precisely the
$\rho_d$, for $0\le d\le p-1$ (cf. \cite[\S5.8]{Benson} and 
\cite{Srinivasan}). The calculation above shows that their trace
functions are $g\mapsto E_d(\tr(g),1)$. 
\end{Remark}

\begin{Lemma}\label{lem:symmetric-power-average-vanishes}
Suppose that $S\subseteq\SL(2,p)$ is sharply transitive on
$\F_p^2\setminus\{0\}$. Then, for $1\le d\le p-2$,
\[
        \sum_{s\in S}\rho_d(s)=0.
\]
\end{Lemma}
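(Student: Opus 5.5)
The plan is to evaluate $\sum_{s\in S}\rho_d(s)$ on a spanning set of $W_d$ built from the natural module. Since $W_d=\Sym^d(W)$ is the space of homogeneous polynomials of degree $d$ in two variables, it is spanned by the $d$-th powers $\ell^d$ of linear forms $\ell\in W^*$, because $k$ is infinite and $d\le p-2<p$. Dually, $W_d^*\cong\Sym^d(W^*)$ (again using $d<p$) is spanned by the evaluation functionals $f\mapsto f(v)$ at vectors $v$. So it suffices to show that, for all $v,w\in k^2$,
\[
\sum_{s\in S}\bigl((\ell\circ s^{-1})(w)\bigr)^d=0,
\]
or, in whichever convention $\rho_d$ is written, that $\sum_{s\in S}\langle u, s w\rangle^d=0$ for all covectors $u$ and vectors $w$. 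One must be careful here to check that the pairing $\langle \ell^d, \mathrm{ev}_v\rangle=\ell(v)^d$ really separates points when $d<p$.

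The key step is to reduce to $\F_p$-rational data and then use sharp transitivity. The expression $\sum_{s\in S}\langle u,sw\rangle^d$ is a polynomial in the coordinates of $u$ and $w$ with coefficients in $\F_p$. Since $d\le p-2$, its degree in each variable group is below $p$, so it vanishes identically as soon as it vanishes on all $u\in(\F_p^2)^*$ and $w\in\F_p^2$; this is an elementary multi-degree argument, noting that it is homogeneous of degree $d$ in $u$ and in $w$ separately. Alternatively one can write $\langle u,sw\rangle^d=\sum$ of monomial matrix-coefficient terms and argue on coefficients.

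So I fix $w\neq0$ in $\F_p^2$ (the case $w=0$ is trivial). By sharp transitivity, as $s$ ranges over $S$, the vector $sw$ runs exactly once over $\F_p^2\setminus\{0\}$. Then
\[
\sum_{s\in S}\langle u,sw\rangle^d=\sum_{x\in\F_p^2\setminus\{0\}}\langle u,x\rangle^d=\sum_{x\in\F_p^2}\langle u,x\rangle^d,
\]
using $d\ge1$. For $u\neq0$, the linear form $x\mapsto\langle u,x\rangle$ takes each value of $\F_p$ exactly $p$ times, so this sum equals $p\sum_{c\in\F_p}c^d=0$ in characteristic $p$. In fact even $\sum_{c\in\F_p}c^d=0$ holds for $1\le d\le p-2$, so no extra factor is needed; this is also where the bound $d\le p-2$ enters again. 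For $u=0$ the sum is trivially zero.

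The main obstacle is the passage from rational vectors to the whole $kG$-module: justifying that the matrix coefficients $\langle u,sw\rangle^d$ for rational $u,w$ span the dual of $\mathrm{End}(W_d)$. The cleanest route is to note that the entries of $\rho_d(s)$ in the monomial basis are coefficients of the polynomial $(u,w)\mapsto\langle u,sw\rangle^d$, up to the binomial coefficients $\binom{d}{j}$, which are units since $d<p$. A polynomial of degree less than $p$ in each variable that vanishes on all of $\F_p$-points is zero, which forces every entry of $\sum_{s\in S}\rho_d(s)$ to vanish.
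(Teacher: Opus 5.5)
Your argument is correct and is essentially the paper's proof: you reduce to $\F_p$-rational data using $d<p$, then let sharp transitivity turn the sum over $S$ into a sum over $\F_p^2\setminus\{0\}$, and finish with a power-sum identity. The paper does the reduction by spanning $W_d$ with the pure powers $v^d$ via a Vandermonde system, while you use matrix coefficients, the multi-degree vanishing argument and the units $\binom{d}{j}$; for the last step, the paper groups nonzero vectors into lines and uses $\sum_{\lambda\in\F_p^\ast}\lambda^d=0$, whereas you pair with a covector and get a factor $p$ from equidistribution of a linear form, so your version also covers $d=p-1$.
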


\begin{proof}
Put $M_d=\sum_{s\in S}\rho_d(s)$. It is enough to evaluate $M_d$ on pure
powers, since the pure powers $v^d$, with $v\in\F_p^2$, span $W_d$ when
$d<p$. Indeed, choosing $d+1$ distinct values $u\in\F_p$, the vectors
$(X+uY)^d$ form a Vandermonde system in the monomial basis of $W_d$.
Let $v\in\F_p^2\setminus\{0\}$. Sharp transitivity gives
$\{sv:s\in S\}=\F_p^2\setminus\{0\}$, and hence
\[
        M_d(v^d)=\sum_{w\in\F_p^2\setminus\{0\}}w^d.
\]
Grouping the nonzero vectors by one-dimensional subspaces, the contribution
of a line spanned by $u$ is
\[
        \sum_{\lambda\in\F_p^\ast}(\lambda u)^d
        =\left(\sum_{\lambda\in\F_p^\ast}\lambda^d\right)u^d=0,
\]
because $1\le d\le p-2$. Thus $M_d=0$.
\end{proof}

Applying Lemma~\ref{lem:symmetric-power-average-vanishes} to $x^{-1}S$ and
taking traces gives
\begin{equation}
        \sum_{s\in S}E_d\bigl(\tr(x^{-1}s),1\bigr)=0
        \qquad(1\le d\le p-2)
        \label{eq:dickson-trace-identity}
\end{equation}
in $\F_p$, for every $x\in G$.

\begin{Theorem}
\label{thm:nonsplit-residue-general}
Suppose that $S\subseteq\SL(2,p)$ is sharply transitive on
$\F_p^2\setminus\{0\}$. If $t^2-4$ is a nonsquare in $\F_p$, then, for
every $x\in G$,
\[
        c_t(x)\equiv t^2-3\pmod p.
\]
\end{Theorem}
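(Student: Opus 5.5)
The plan is to fix $x\in G$ and study the full trace-distribution function $N(t):=|\{s\in S:\tr(x^{-1}s)=t\}|$ for $t\in\F_p$, viewed as an $\F_p$-valued function on $\F_p$. First, reduce the values of $N$ that are already known modulo $p$. Lemma~\ref{lem:outer-distribution} gives $N(2)=\epsilon_x+(p+1)(1-\epsilon_x)\equiv1$ and $N(-2)=\alpha_x+(p+1)(1-\alpha_x)\equiv1$. It also gives $N(t)=p+1\equiv1$ for $t\in\Sigma_{\mathrm{sp}}$. So $N\equiv1$ off $\Sigma_{\mathrm{ns}}$, and the unknowns are $N(t)=c_t(x)$ for $t\in\Sigma_{\mathrm{ns}}$. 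For these, \eqref{eq:dickson-trace-identity} gives $\sum_t N(t)E_d(t,1)=0$ for $1\le d\le p-2$. In addition, $\sum_tN(t)=|S|=p^2-1\equiv-1$.

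\textbf{Uniqueness.} Since $E_d(T,1)$ is monic of degree $d$, the conditions say that the linear functional $P\mapsto\sum_tN(t)P(t)$ is determined on all polynomials of degree $\le p-2$. Suppose two functions $N,N'$ satisfy all these conditions and agree off $\Sigma_{\mathrm{ns}}$. Then $D=N-N'$ is orthogonal to every polynomial of degree $\le p-2$. The space of such functions is one-dimensional and consists of the constants: constants are orthogonal because $\sum_t t^j=0$ for $j<p-1$, and the polynomials of degree $\le p-2$ have codimension $1$ in $\F_p^{\F_p}$. Hence $D$ is constant. But $D(2)=0$, so $D=0$. It therefore suffices to exhibit one function $g$ satisfying these conditions, with $g\equiv1$ off $\Sigma_{\mathrm{ns}}$ and $g(t)=t^2-3$ on $\Sigma_{\mathrm{ns}}$.

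\textbf{Existence.} This is the main computation, and I would carry it out by parametrising traces in $\F_{p^2}$. Write $\mu_m$ for the $m$-th roots of unity in $\F_{p^2}$. Every $t\ne\pm2$ equals $\lambda+\lambda^{-1}$ for exactly two $\lambda\in\mu_{p-1}$ if $t\in\Sigma_{\mathrm{sp}}$, and for exactly two $\lambda\in\mu_{p+1}$ if $t\in\Sigma_{\mathrm{ns}}$. The values $t=\pm2$ come from $\lambda=\pm1$, which lie in both groups. Hence for any $F$,
\[
\sum_t g(t)F(t)=\tfrac12\sum_{\lambda\in\mu_{p-1}}F(\lambda+\lambda^{-1})+\tfrac12\sum_{\lambda\in\mu_{p+1}}(\lambda^2+\lambda^{-2}-1)F(\lambda+\lambda^{-1}),
\]
using $t^2-3=\lambda^2+\lambda^{-2}-1$.

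Now take $F=E_d(\cdot,1)$, so that $F(\lambda+\lambda^{-1})=\sum_{i=0}^d\lambda^{d-2i}$, and use $\sum_{\lambda\in\mu_m}\lambda^k=m\,[m\mid k]$. For $d\le p-2$, the exponents occurring have absolute value $<p-1$ in the first sum and $\le p<p+1$ in the second, so only exponent $0$ contributes.
\begin{itemize}
\item For $d$ odd, both sums vanish.
\item For $d\ge2$ even, the first sum contributes $\tfrac12(p-1)\equiv-\tfrac12$. The second sum picks up $\lambda^2\cdot\lambda^{-2}$, $\lambda^{-2}\cdot\lambda^2$ and $-1\cdot\lambda^0$, contributing $\tfrac12(p+1)(1+1-1)\equiv\tfrac12$. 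The total is $0$.
\item For $d=0$, we get $\tfrac12(p-1)-\tfrac12(p+1)=-1$, matching $|S|\equiv-1$.
\end{itemize}
So $g$ satisfies all the conditions, and uniqueness gives $c_t(x)=N(t)\equiv t^2-3\pmod p$ for $t\in\Sigma_{\mathrm{ns}}$.

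The main obstacle is guessing and verifying the candidate $g$; the root-of-unity parametrisation makes the verification routine.
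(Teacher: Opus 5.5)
Your proposal is correct and follows the paper's proof essentially step for step: the same reduction of the trace distribution modulo $p$ to $1$ off the nonsplit traces, the same Dickson trace identities together with $|S|\equiv-1$, and the same check of the candidate $\gamma_t$ via the $\F_p^\ast$/norm-one parametrisation $t=\lambda+\lambda^{-1}$. The only difference is in the uniqueness step. You observe that the annihilator of all polynomials of degree $\le p-2$ is the constants, so one known zero suffices. The paper instead uses only $d\le(p-3)/2$ and a square Vandermonde system on the $(p-1)/2$ nonsplit traces. Both arguments are valid.
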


\begin{proof}
Fix $x\in G$. For $t\in\F_p$, define $C_t(x):=|\{s\in S:\tr(x^{-1}s)=t\}|$,
so that $C_t(x)=c_t(x)$ for $t\ne\pm2$,
while $C_2(x)=c_2(x)+\epsilon_x$ and $C_{-2}(x)=c_{-2}(x)+\alpha_x$.
By Lemma~\ref{lem:outer-distribution},
$C_t(x)\equiv1\pmod p$ whenever $t^2-4$ is a nonzero square, and also
\[
        C_2(x)\equiv C_{-2}(x)\equiv1\pmod p.
\]
Equation \eqref{eq:dickson-trace-identity} gives
\[
        \sum_{t\in\F_p}C_t(x)E_d(t,1)=0
        \qquad(1\le d\le p-2),
\]
and the total-size equation is
\begin{equation}
        \sum_{t\in\F_p}C_t(x)=|S|=p^2-1\equiv-1\pmod p.
        \label{eq:trace-total-size}
\end{equation}

We claim that the unique solution modulo $p$ compatible with these equations
and the known split and nonregular values is
\begin{equation}
        C_t(x)\equiv
        \begin{cases}
        1,&t^2-4\text{ is a square or zero},\\
        t^2-3,&t^2-4\text{ is a nonsquare}
        \end{cases}
        \pmod p.
        \label{eq:claimed-trace-residues}
\end{equation}

Let $\mathcal U:=\{t\in\F_p:t^2-4\text{ is a nonsquare}\}$.
By Lemma~\ref{lem:quadratic-character-sum},
$|\mathcal U|=(p-1)/2$. If two solutions agree off $\mathcal U$, their
difference $(u_t)_{t\in\mathcal U}$ satisfies
\[
        \sum_{t\in\mathcal U}u_tE_d(t,1)=0
        \qquad
        \left(0\le d\le\frac{p-3}{2}\right),
\]
where $d=0$ is supplied by \eqref{eq:trace-total-size}. The polynomials
$E_0(T,1),E_1(T,1),\ldots,E_{(p-3)/2}(T,1)$ are monic of consecutive
degrees. Their evaluation matrix on the distinct elements of $\mathcal U$
is therefore a Vandermonde matrix multiplied by an invertible unitriangular
matrix. It is nonsingular, so all $u_t$ vanish. This proves uniqueness.

It remains to verify \eqref{eq:claimed-trace-residues}. Put
\[
        \gamma_t:=
        \begin{cases}
        1,&t^2-4\text{ is a square or zero},\\
        t^2-3,&t^2-4\text{ is a nonsquare},
        \end{cases}
\]
and
\[
        P_d(\lambda):=E_d(\lambda+\lambda^{-1},1)
        =\lambda^d+\lambda^{d-2}+\cdots+\lambda^{-d}.
\]
Let $\Omega=\{\lambda\in\F_{p^2}^\ast:\lambda^{p+1}=1\}$ 
be the norm-one subgroup. Regular split traces are parametrised twice by
$t=\lambda+\lambda^{-1}$ with $\lambda\in\F_p^\ast$, and nonsplit traces
are parametrised twice by the same formula with
$\lambda\in\Omega\setminus\{\pm1\}$. The values $t=\pm2$ occur once in
each parametrisation. Moreover, at $\lambda=\pm1$,
\[
        \lambda^2+\lambda^{-2}-1=1,
\]
so the two half-contributions at $t=\pm2$, where the two
preimages $\lambda$ and $\lambda^{-1}$ coincide, combine to give the
required single contribution. Since, for a nonsplit trace,
$t^2-3=\lambda^2+\lambda^{-2}-1$, we obtain
\begin{equation}
\sum_{t\in\F_p}\gamma_tE_d(t,1)
=
\frac12\left(
\sum_{\lambda\in\F_p^\ast}P_d(\lambda)
+
\sum_{\lambda\in\Omega}
(\lambda^2+\lambda^{-2}-1)P_d(\lambda)
\right).
\label{eq:split-nonsplit-parametrisation}
\end{equation}

For a finite cyclic group $H$,
\[
        \sum_{\lambda\in H}\lambda^m
        =
        \begin{cases}
        |H|,&|H|\mid m,\\
        0,&|H|\nmid m.
        \end{cases}
\]
The exponents occurring in $P_d$ lie between $-d$ and $d$. Since
$0\le d\le p-2$, the only exponent divisible by $p-1$ is $0$. Therefore
\begin{equation}
        \sum_{\lambda\in\F_p^\ast}P_d(\lambda)
        =
        \begin{cases}
        -1,&d\text{ even},\\
        0,&d\text{ odd},
        \end{cases}
        \quad\text{in }\F_p.
        \label{eq:split-geometric-sum}
\end{equation}

The exponents in
$(\lambda^2+\lambda^{-2}-1)P_d(\lambda)$ lie between $-d-2$ and $d+2$,
and hence between $-p$ and $p$. Thus the only exponent divisible by $p+1$
is again $0$. Since $|\Omega|=p+1\equiv1\pmod p$, the sum over $\Omega$
is the coefficient of $\lambda^0$, namely
\begin{equation}
        \sum_{\lambda\in\Omega}
        (\lambda^2+\lambda^{-2}-1)P_d(\lambda)
        =
        \begin{cases}
        -1,&d=0,\\
        1,&d\ge2\text{ even},\\
        0,&d\text{ odd}.
        \end{cases}
        \label{eq:nonsplit-geometric-sum}
\end{equation}
For even $d\ge2$, the coefficient is $1+1-1=1$, coming respectively from
the $\lambda^{-2}$, $\lambda^2$, and $\lambda^0$ terms of $P_d$.
Combining \eqref{eq:split-nonsplit-parametrisation},
\eqref{eq:split-geometric-sum}, and \eqref{eq:nonsplit-geometric-sum} gives
\[
        \sum_{t\in\F_p}\gamma_tE_d(t,1)
        =
        \begin{cases}
        -1,&d=0,\\
        0,&1\le d\le p-2,
        \end{cases}
\]
so \eqref{eq:claimed-trace-residues} is indeed the unique solution. Since a
nonsplit trace value is never $\pm2$, we have $c_t(x)=C_t(x)$ there, and
hence
\[
        c_t(x)\equiv t^2-3\pmod p.\qedhere
\]
\end{proof}

\section{The proof of Theorem \ref{maintheorem}}

The following elementary lemma explains why the threshold ``$p\ge13$''
turns up.

\begin{Lemma}
\label{lem:choose-good-Q}
For every prime $p\ge13$, there is an integer $Q$ with $4\le Q\le p-6$
such that $Q+3$ is a square and $Q-1$ is a nonsquare in $\F_p$.
\end{Lemma}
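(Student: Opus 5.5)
We need some $Q$ with $4\le Q\le p-6$ such that $\chi(Q+3)=1$ and $\chi(Q-1)=-1$. Shift the variable to $m=Q-1$, so $3\le m\le p-7$. The requirement becomes $\chi(m+4)=1$ and $\chi(m)=-1$.

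The plan is to find such an $m$ by a short explicit case analysis on the characters of small integers, rather than by a character-sum count. A counting argument would only kick in for larger $p$ anyway. The candidates to try first are $m=5$ (with $m+4=9$ automatically a square) and $m=12$ (with $m+4=16$ a square).

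\textbf{Case $\chi(5)=-1$.} Take $m=5$, i.e.\ $Q=6$. This needs $p-6\ge6$, which holds for $p\ge13$. Note that $5\neq0$ in $\F_p$ since $p\ge13$.

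\textbf{Case $\chi(12)=\chi(3)=-1$.} Take $m=12$, i.e.\ $Q=13$. This needs $13\le p-6$, so $p\ge19$. The only prime in range not covered is $p=13$, and $13\equiv1\pmod{12}$ forces $\chi(3)=1$ there. So this subcase never actually meets $p=13$; the other case $p=17$ is also excluded, since $17\equiv5\pmod{12}$ gives $\chi(3)=-1$ but the range fails. Primes $13$ and $17$ will therefore be checked directly:
\begin{itemize}
\item[] For $p=13$, take $Q=6$: $Q+3=9$ is a square and $Q-1=5$ is a nonsquare mod $13$.
\item[] For $p=17$, take $Q=4$: $Q+3=7$ and $Q-1=3$. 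The squares mod $17$ are $1,4,9,16,8,2,15,13$, so $7$ is not a square and this fails. Instead take $Q=6$: $5$ is a nonsquare mod $17$ and $9$ is a square, so it works.
\end{itemize}
Hence assume from now on that $p\ge19$ and $\chi(5)=\chi(3)=1$. Note that $\chi(2)$ is unconstrained so far.

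\textbf{Remaining case.} Since the squares are closed under multiplication, here the residues $3,5,9,15,25,\dots$ are all squares. We look for a nonsquare $m$ with $m+4$ a square. Use $m=-1$ or $m=2$ cleverly, and otherwise argue generally. If $\chi(2)=-1$, take $m=5\cdot2=10$ with $m+4=14$, which is not always a square. So instead pick a cleaner general argument: let $m$ be the least nonsquare in $\{1,\dots,p-1\}$. Then $m\ge2$. Since $3,4,5$ are squares, $m\ne3,4,5$, and also $m\ne2$ would be needed.

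Here is the more robust route. Let $n$ be the smallest positive integer that is a nonsquare mod $p$; it is prime and satisfies $n<\sqrt p+1$. By minimality, $n-4$ cannot be used directly. Instead consider $m=n$ and note that $n+4$ may fail to be a square.

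The fallback, which I expect is what actually works, is the standard character-sum count. Let
\[
N=\frac14\sum_{m=3}^{p-7}(1+\chi(m+4))(1-\chi(m)).
\]
Expanding gives $4N\ge(p-9)-\bigl|\sum\chi(m)\bigr|-\bigl|\sum\chi(m+4)\bigr|-\bigl|\sum\chi(m)\chi(m+4)\bigr|$. Completing each sum to all of $\F_p$ costs at most $9$ terms apiece. The complete sums of $\chi$ vanish, and the full sum of $\chi(m(m+4))$ equals $-1$ by Lemma~\ref{lem:quadratic-character-sum} after completing the square. This yields $4N\ge p-9-9-9-10$, which is positive for $p\ge 41$. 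The primes $19\le p\le37$ are then handled by exhibiting $Q$ by hand.

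\textbf{Main obstacle.} The key step is making the error terms in the incomplete sums explicit and small enough that the finite check of small primes stays short.
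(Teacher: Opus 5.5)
Your final argument is essentially the paper's, a character-sum count of admissible values followed by a finite check of small primes; the only difference is bookkeeping: you bound the incomplete sums over $3\le m\le p-7$ directly and get $4N\ge p-37$, hence $p\ge 41$, whereas the paper counts exactly over $\F_p\setminus\{0,4\}$, obtains $(p-2+\chi(-1))/4$, and discards the at most seven values outside the interval, which already works for $p\ge 37$. The hand check for $19\le p\le 37$ that you assert but do not carry out is already covered by your own first two cases, because $\chi(3)=-1$ for $p=19,29,31$ (so $Q=13\le p-6$ works) and $\chi(5)=-1$ for $p=23,37$ (so $Q=6$ works); you should state this explicitly and delete the abandoned exploratory paragraphs about $m=10$ and the least nonsquare.
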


\begin{proof}
Put $z=Q+3$. It suffices to find a square $z\in\F_p$ such that $z-4$ is a
nonsquare and whose standard integer representative lies in
$\{7,8,\ldots,p-3\}$. The number of $z\in\F_p\setminus\{0,4\}$ satisfying
$\chi(z)=1$ and $\chi(z-4)=-1$ is
\[
\frac14\sum_{z\neq0,4}(1+\chi(z))(1-\chi(z-4)).
\]
This is equal to
\[
\frac14\left( p-2+\sum_{z\neq0,4}\chi(z)
 -\sum_{z\neq0,4}\chi(z-4)
 -\sum_{z\neq0,4}\chi(z(z-4))
\right)=\frac{p-2+\chi(-1)}4.
\]
Here the last equality uses Lemma~\ref{lem:quadratic-character-sum}, together
with
\[
        \sum_{z\neq0,4}\chi(z)=-1,
        \qquad
        \sum_{z\neq0,4}\chi(z-4)=-\chi(-1).
\]

For $p\ge37$, this number is greater than $7$. Outside the required
interval, and apart from $0$ and $4$, there are at most the seven values
$1,2,3,5,6,-2,-1$. Hence an admissible $z$ exists. For the remaining
primes, one may take
\[
\begin{array}{c|cccccc}
 p&13&17&19&23&29&31\\
\hline
 Q&6&6&4&6&4&4.
\end{array}
\]
In each case, $Q+3$ is a square and $Q-1$ is a nonsquare.
\end{proof}

We now prove Theorem \ref{maintheorem}.

\begin{proof}[Proof of Theorem \ref{maintheorem}]
Let $p$ be a prime with $p\ge13$. Suppose, for a contradiction, that such a
subset $S\subseteq G$ exists, and put $n:=|S|=p^2-1$. Choose $Q$ as in
Lemma~\ref{lem:choose-good-Q}, and choose $a\in\F_p$ with $a^2=Q+3$.
Then $a^2-4=Q-1$ is a nonsquare. Put
$b:=a^2-2=Q+1$. Since $b^2-4=a^2(a^2-4)$, the value $b$ is also nonsplit. Let 
$R\in\{0,1,\ldots,p-1\}$ be the standard integer representative of
$b^2-3$. Since $a^2-3=Q$, Theorem~\ref{thm:nonsplit-residue-general}
gives
\[
        c_a(x)\equiv Q\pmod p,
        \qquad
        c_b(x)\equiv R\pmod p
        \qquad(x\in G).
\]

Define
\[
        F_Q(z):=(z-Q)(z-Q-p).
\]
Since $Q$ and $R$ are the standard representatives of their residue
classes, the preceding congruences imply
$F_Q(c_a(x))\ge0$ for all $x\in G$, and $c_b(x)-R\ge0$ for all $x\in S$.
Consequently,
\begin{equation}
        0\le
        T:=\sum_{x\in G}F_Q(c_a(x))
        +p\sum_{x\in S}\bigl(c_b(x)-R\bigr).
        \label{eq:alternative-nonnegative-T}
\end{equation}

By Lemma~\ref{lem:intersection-numbers}, the trace relation $R_a$ has
valency $p(p-1)$, and hence
\[
        \sum_{x\in G}c_a(x)=p(p-1)n.
\]
Also $|G|=pn$. Expanding \eqref{eq:alternative-nonnegative-T} and applying
Proposition~\ref{prop:two-step-walks} gives
\[
\begin{aligned}
T
 &=p(p^2+p-2)n
   -(2Q+p)p(p-1)n
   +Q(Q+p)pn
   -pRn\\
 &=-pn\bigl(Q(p-2-Q)+R-2p+2\bigr).
\end{aligned}
\]
But
\[
        Q(p-2-Q)-4(p-6)
        =(Q-4)(p-6-Q)\ge0,
\]
and therefore
\[
        Q(p-2-Q)+R-2p+2
        \ge4(p-6)-2p+2
        =2p-22
        \ge4.
\]
Thus $T<0$, contradicting \eqref{eq:alternative-nonnegative-T}.
\end{proof}

\begin{Remark}
    We have shown that a sharply transitive subset of $\SL(2,q)$ exists if and only if
    $q\in\{2,3,5,7,11\}$. Indeed, it was shown by computer in \cite[\S5]{CoolsaetDeBeuleSiciliano}
    that any sharply transitive subset, in these cases, is a coset of a subgroup of $\SL(2,q)$. 
\end{Remark}

\subsection*{Acknowledgements}

The second author was supported by postdoctoral fellowship 1267923N from the Research Foundation Flanders (FWO),
and is extremely grateful to The University of Western Australia for their hospitality during his
visit in November, 2024.

\end{document}